\newtheorem{theorem}{Theorem}[section]
\newtheorem{lemma}[theorem]{Lemma}
\newtheorem{proposition}[theorem]{Proposition}
\theoremstyle{definition}
\newtheorem{definition}[theorem]{Definition}
\title{Approximate Triangulations of Grassmann Manifolds}
\author{Kevin P. Knudson}
\address{Department of Mathematics, University of Florida, Gainesville, FL 32611 USA; kknudson@ufl.edu}
\email{kknudson@ufl.edu}
\keywords{Grassmannian; persistent homology; Vietoris-Rips complex; witness complex; triangulation}
\subjclass[2020]{55N31, 57Q15}
\date{\today}
\newcommand{\zr}{{\mathbb R}}
\newcommand{\zz}{{\mathbb Z}}
\newcommand{\rp}{\zr P}
\newcommand{\grkn}{G_k(\zr^n)}
\newcommand{\gr}{G_2(\zr^4)}
\begin{document}


\begin{abstract}
We define the notion of an approximate triangulation for a manifold $M$ embedded in euclidean space. The basic idea is to build a nested family of simplicial complexes whose vertices lie in $M$ and use persistent homology to find a complex in the family whose homology agrees with that of $M$. Our key examples are various Grassmann manifolds $\grkn$.
\end{abstract}

\maketitle

\section{Introduction}
Smooth manifolds admit piecewise-linear triangulations \cite{whitehead}. However, there are many subsequent questions one might ask: How many simplices are required? What is the minimal number of vertices? Is there an algorithm to construct a triangulation? 

A great deal of work in algebraic topology has been devoted to these questions. The question of the number of simplices required to triangulate a given manifold is often attacked by sophisticated cohomological methods involving characteristic classes (such arguments also often yield estimates on the minimal embedding dimension for the manifold). Surprisingly, much of this work is very recent \cite{duan}, \cite{govc}. A main result in \cite{govc} is the following. 

\begin{theorem}[\cite{govc}, Theorem 3.10]
Every triangulation of the Grassmann manifold $G_k(\zr^{n+k})$ must have at least $$[(n+k)(n+k+1) - 2kn]\cdot(2^{kn+1}-1)$$ simplices.
\end{theorem}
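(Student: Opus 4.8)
The plan is to obtain the lower bound as a product of two independent estimates: a lower bound on the number $N$ of vertices of an arbitrary triangulation $K$ of $\grkn$, and a lower bound — valid for every triangulated closed $d$-manifold with $d=kn$ — on the total number of faces in terms of $N$.

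\emph{The vertex bound.} If $K$ triangulates $\grkn$, the open stars $\{\mathrm{st}(v)\}_{v}$ of its vertices form a good cover: each $\mathrm{st}(v)$, and each nonempty intersection $\mathrm{st}(v_0)\cap\dots\cap\mathrm{st}(v_j)$ (which is the open star of the simplex $\{v_0,\dots,v_j\}$), is contractible, and the nerve of this cover is $K$ itself, hence homotopy equivalent to $\grkn$. Thus $N\ge \mathrm{ct}(\grkn)$, the covering type. The task then becomes a cohomological lower bound for $\mathrm{ct}(\grkn)$, the input being the mod $2$ cohomology ring: $H^*(\grkn;\zz/2)$ is the Borel presentation $\zz/2[w_1,\dots,w_k]$ modulo the ideal generated by the degree $n+1,\dots,n+k$ parts of $(1+w_1+\dots+w_k)^{-1}$, it has total dimension $\binom{n+k}{k}$, and a great many monomials in the $w_i$ survive in the quotient. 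One feeds this multiplicative structure (not merely a single long cup product) into the covering-type inequalities, which bound below how many cells of each dimension a good cover must contain in order for its nerve to carry all those nonzero products; the goal is the estimate $\mathrm{ct}(\grkn)\ge (n+k)(n+k+1)-2kn$. Note that the right-hand side equals $2\bigl(\binom{n+k+1}{2}-kn\bigr)$, twice the codimension of $\grkn$ inside the space of symmetric $(n+k)\times(n+k)$ matrices, so that is the target value.

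\emph{The face bound.} In any triangulation of a closed $d$-manifold the closed star of a vertex $v$ is the cone $v*\mathrm{lk}(v)$ on a triangulated $(d-1)$-sphere, which has at least $2^{d+1}-2$ faces; hence at least $2^{d+1}-1$ faces contain $v$. Aggregating these local counts over the $N$ vertices and combining with the pseudomanifold $f$-vector relations (every $(d-1)$-face lies in exactly two $d$-faces) and the Lower Bound Theorem for triangulated manifolds, one finds that a triangulation with $N$ vertices has at least of order $2^{d+1}N$ faces. With $d=kn$ and $N\ge (n+k)(n+k+1)-2kn$ this yields the asserted bound, once the bookkeeping in the $f$-vector inequalities is organized so that the constant is exactly $2^{kn+1}-1$.

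The hard part is the vertex bound. The cheap route through Lusternik--Schnirelmann category gives only $\mathrm{ct}(\grkn)\ge\mathrm{cup}_{\zz/2}(\grkn)+1$, which is far too weak --- for $\gr$, say, the cup length is $3$ while the target is $12$. One must therefore genuinely exploit the \emph{width} of the cohomology ring, i.e.\ that many independent products of low-degree classes are nonzero, and convert that into the sharp quadratic-in-$(n+k)$ estimate; pinning down the exact constant $(n+k)(n+k+1)-2kn$ is the delicate point. (For $k=1$, where $\grkn=\rp^n$, the sharp value of $\mathrm{ct}$ has a somewhat different shape, so the statement is of interest mainly for $k\ge 2$.) The face-counting step is comparatively soft.
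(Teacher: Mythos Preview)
First, note that the paper under review does not prove this statement at all: it is quoted from \cite{govc} in the introduction as motivation and is never revisited. There is thus no ``paper's own proof'' to compare against; what follows is an assessment of your sketch on its own merits.

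Your proposed factorization into (vertex bound)$\times$(faces-per-vertex bound) does not work, and the face-bound step is the fatal one. You argue that each vertex of a triangulated closed $d$-manifold lies in at least $2^{d+1}-1$ simplices and then ``aggregate'' to conclude that the total face count is at least $N(2^{d+1}-1)$. But summing the local star sizes over all $N$ vertices counts each $j$-simplex once for each of its $j+1$ vertices, yielding $\sum_j (j{+}1)f_j$, not $\sum_j f_j$; no invocation of the Lower Bound Theorem or the pseudomanifold relations undoes this overcount. The claimed inequality is in fact false: the minimal triangulation of $\rp^2$ (here $d=2$) has $f_0=6$, $f_1=15$, $f_2=10$, a total of $31$ simplices, whereas your bound would demand $6\cdot(2^3-1)=42$.

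That same example shows the stated formula cannot hold for $k=1$: with $k=1$, $n=2$ it gives $(3\cdot 4-4)(2^3-1)=56>31$. So either the result in \cite{govc} carries a hypothesis (e.g.\ $2\le k$) that was omitted in the citation, or the constants there differ; in either case your target $\mathrm{ct}(\rp^2)\ge 8$ for the vertex step is also impossible, since the $6$-vertex $\rp^2$ already gives $\mathrm{ct}(\rp^2)\le 6$. You flag $k=1$ as anomalous, but the arithmetic shows the two-step scheme cannot produce the displayed product even when it is valid.

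The argument in \cite{govc} does feed the mod~$2$ cup-product structure of $H^*(\grkn;\zz/2)$ into category-type lower bounds, so that part of your intuition is on track; but the passage from the cohomological invariant to the total simplex count is not the per-vertex star estimate you propose. To reconstruct the proof you will need to consult \cite{govc} for the correct mechanism linking the invariant to the full $f$-vector.
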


For example, any triangulation of the manifold $\gr$ must have at least $372$ simplices. The Grassmann manifolds will be defined in Section \ref{grass} below. These are important spaces to study because of their utility in algebraic topology, especially with respect to the study of characteristic classes \cite{milnor}. 

Unfortunately, most results along these lines are {\em not} constructive; that is, the proofs do not yield an explicit triangulation of the manifold. In fact, if one seeks a triangulation of a Grassmannian $\grkn$ the end result is usually disappointment. For the smallest nontrivial space, $G_1(\zr^3) = \rp^2$, there are many well-known small triangulations, and even an algorithm to generate a triangulation from any collection of points in general position \cite{teillaud}. Beyond that, however, results are sparse.

In this paper, we develop a procedure to find what we call an {\em approximate triangulation} of the manifold $\grkn$ (Definition \ref{approxtri}). The basic idea is to first generate a sample of points on $\grkn$. This already leads to technical difficulties involving embeddings of these spaces into a euclidean space $\zr^N$, but we are able to solve this. We then build a nested family of simplicial complexes on the point cloud, parametrized by the positive real numbers. The persistent homology of this family is then computed and we identify an interval of parameters for which the mod 2 homology of the complexes in that range agrees with that of $\grkn$. Such a complex is then a viable model for the manifold: its vertices lie in $\grkn\subset\zr^N$ and it has the correct homology. We then implement this procedure for the following spaces: $\rp^2\subset\zr^4$, $\rp^2\subset\zr^5$, $\rp^3\subset\zr^9$, and $\gr\subset\zr^{16}$. Computational limitations have so far prohibited further calculations; we discuss this in Section \ref{conc}. 

\noindent {\bf Acknowledgments.} This problem was suggested to me by Vidit Nanda; I thank him for the inspiration and helpful conversations. Henry Adams provided useful tips for Javaplex. I am also grateful to Mikael Vejdemo-Johansson for the use of his rather powerful computer.
 
\section{Materials and Methods}\label{schubert}
Further details and proofs of the results in Subsections \ref{grass} and \ref{cells} may be found in \cite{milnor}. 

\subsection{Grassmann manifolds}\label{grass} 
Denote by $\zr^n$ the euclidean space of dimension $n$. By a {\em $k$-frame} in $\zr^n$ we mean a $k$-tuple of linearly independent vectors; denote by $V_k(\zr^n)$ the collection of $k$-frames in $\zr^n$. This is an open subset of the $k$-fold cartesian product $\zr^n\times\cdots\times \zr^n$.

\begin{definition} The {\em Grassmann manifold} $G_k(\zr^n)$ is the set of all $k$-dimensional planes through the origin in $\zr^n$. It is topologized via the quotient map $V_k(\zr^n)\to G_k(\zr^n)$ which takes a $k$-frame to the $k$-plane it spans.
\end{definition}

When $k=1$, we see that $G_1(\zr^n)$ is the real projective space $\rp^{n-1}$, a manifold of dimension $n-1$. In general we have the following result.

\begin{lemma}
The Grassmannian $G_k(\zr^n)$ is a compact manifold of dimension $k(n-k)$. The map $X\to X^\perp$, which takes a $k$-plane to its orthogonal complement is a diffeomorphism between $\grkn$ and $G_{n-k}(\zr^n)$. 
\end{lemma}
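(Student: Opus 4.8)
The plan is to produce an explicit smooth atlas on $\grkn$, to deduce compactness from a realization of $\grkn$ as a closed bounded subset of a space of matrices, and then to observe that in that matrix model the orthogonal complement map is nothing but the affine involution $P\mapsto I-P$.

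First I would build the charts. Fix $X_0\in\grkn$ and write $\zr^n = X_0\oplus X_0^\perp$. For a linear map $T\in\mathrm{Hom}(X_0,X_0^\perp)$, its graph $\Gamma_T=\{v+Tv:v\in X_0\}$ is a $k$-plane, and conversely every $k$-plane $X$ with $X\cap X_0^\perp=0$ is $\Gamma_T$ for a unique $T$. This gives a bijection from $\mathrm{Hom}(X_0,X_0^\perp)\cong\zr^{k(n-k)}$ onto the set $U_{X_0}=\{X\in\grkn : X\cap X_0^\perp = 0\}$, and these sets cover $\grkn$ as $X_0$ varies. To see the transition maps are smooth, represent a plane near $X_0$ by a $k$-frame written as a matrix and compare it against the splitting by the usual elimination: the change of coordinates then becomes a rational function of the matrix entries whose denominators (certain minors) are nonzero on the overlap, hence smooth. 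This shows $\grkn$ is a smooth manifold of dimension $k(n-k)$; Hausdorffness and second countability follow because it is the image of the (Hausdorff, second countable) frame space under a continuous open surjection, or directly from the embedding in the next step.

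Next, compactness. I would use the projection-operator model, sending $X$ to the matrix $P_X$ of orthogonal projection onto $X$. Its image is $\mathcal{P}_k=\{P\in M_n(\zr): P^2=P,\ P^{T}=P,\ \mathrm{tr}\,P=k\}$, which is cut out by polynomial equations, hence closed, and is bounded since those relations force the eigenvalues, and thus the entries, to be controlled; so $\mathcal{P}_k$ is compact. One checks that $X\mapsto P_X$ is a continuous bijection onto $\mathcal{P}_k$ and, in the charts above, a smooth immersion; since the source is the quotient of the frame space and $\mathcal{P}_k$ is Hausdorff, it is a homeomorphism, indeed a diffeomorphism onto $\mathcal{P}_k$. (Equivalently, one may exhibit $\grkn$ as the continuous image of the compact orthonormal Stiefel manifold.) In particular $\grkn$ is compact.

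Finally, the orthogonal complement. Under the projection model, $X\mapsto X^\perp$ corresponds to $P_X\mapsto I-P_X$, because $I-P_X$ is precisely the orthogonal projection onto $X^\perp$; this is the restriction of an affine linear map on $M_n(\zr)$, hence smooth, and it is an involution, so it is its own smooth inverse. Since $\dim X^\perp = n-k$, it carries $\mathcal{P}_k$ onto $\mathcal{P}_{n-k}$, i.e. $\grkn$ onto $G_{n-k}(\zr^n)$, and is therefore a diffeomorphism. I expect the only place needing genuine care is checking that the transition functions between the graph charts are smooth; once that atlas is in hand, compactness via the matrix model and the identification of the orthogonal complement map with $P\mapsto I-P$ are essentially formal.
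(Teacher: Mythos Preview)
Your argument is correct and follows the standard route (graph charts for the smooth structure and dimension, the projection--operator model for compactness, and $P\mapsto I-P$ for the orthogonal complement diffeomorphism). However, the paper does not actually prove this lemma: at the start of Section~\ref{schubert} the author explicitly defers all proofs in Subsections~\ref{grass} and~\ref{cells} to Milnor--Stasheff~\cite{milnor}, and the lemma is simply stated. So there is no ``paper's own proof'' to compare against.

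That said, your choice of the projection--matrix model is well aligned with the paper: it is exactly the content of Proposition~\ref{orthogidem}, which the author \emph{does} prove later and then uses as the working embedding for all the sampling and computations. In that sense your compactness step and your identification of $X\mapsto X^\perp$ with $P\mapsto I-P$ anticipate the paper's Proposition~\ref{orthogidem} rather than its (absent) proof of the lemma. The one point you flag as needing care---smoothness of the transition maps between graph charts---is indeed the only place real work is required, and your description (rational functions with nonvanishing minor denominators on the overlaps) is the right mechanism.
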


\subsection{Schubert cells}\label{cells}
Grassmannians have a well-known cell decompostion into Schubert cells.  Consider the sequence of subspaces of $\zr^n$: $\zr^0\subset\zr^1\subset\zr^2\subset\cdots\subset\zr^n$, where $\zr^i$ consists of the vectors of the form $(a_1,\dots, a_i,0,\dots ,0)$. Any $k$-plane $X$ gives rise to a sequence of integers
$$0\le\dim(X\cap\zr^1)\le\dim(X\cap \zr^2)\le \cdots \le\dim(X\cap \zr^n) = k.$$ Consecutive integers differ by at most 1.

\begin{definition}
A {\em Schubert symbol} $\sigma = (\sigma_1,\dots ,\sigma_k)$ is a sequence of $k$ integers satisfying
$$1\le\sigma_1<\sigma_2<\cdots <\sigma_k\le n.$$
\end{definition}

Given a Schubert symbol $\sigma$, let $e(\sigma)\subset G_k(\zr^n)$ denote the set of $k$-planes $X$ such that $$\dim(X\cap \zr^{\sigma_i}) = i, \dim(X\cap \zr^{\sigma_i-1}) = i-1.$$ Each $X\in G_k(\zr^n)$ belongs to precisely one of the sets $e(\sigma)$.

\begin{lemma}
$e(\sigma)$ is an open cell of dimension $d(\sigma) = (\sigma_1 - 1) + (\sigma_2 -2) + \cdots + (\sigma_k - k)$. 
\end{lemma}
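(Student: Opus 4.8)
The plan is to exhibit a canonical ``echelon'' basis for each plane in $e(\sigma)$ and then read off the number of free parameters. Fix a Schubert symbol $\sigma=(\sigma_1,\dots,\sigma_k)$ and let $e_1,\dots,e_n$ be the standard basis of $\zr^n$, so that $\zr^j=\mathrm{span}(e_1,\dots,e_j)$. For $X\in e(\sigma)$, the defining conditions say exactly that the flag $X\cap\zr^{\sigma_1}\subset X\cap\zr^{\sigma_2}\subset\cdots\subset X\cap\zr^{\sigma_k}=X$ has dimensions $1,2,\dots,k$. First I would choose, for each $i$, a vector $x_i\in X\cap\zr^{\sigma_i}$ whose $\sigma_i$-th coordinate is nonzero (possible since $X\cap\zr^{\sigma_i-1}$ is a proper subspace of $X\cap\zr^{\sigma_i}$), rescale it so this coordinate is $1$, and then run Gauss--Jordan elimination: processing $i=1,\dots,k$ in turn and, for each $i$, subtracting suitable multiples of the already-normalized $x_{i-1},x_{i-2},\dots,x_1$ from $x_i$ clears the $\sigma_1$-th through $\sigma_{i-1}$-th coordinates of $x_i$ without disturbing its $\sigma_i$-th coordinate or any coordinate beyond $\sigma_i$, since each $x_\ell$ is supported in $\zr^{\sigma_\ell}$. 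The outcome is a basis $x_1,\dots,x_k$ of $X$ with $x_i$ supported in $\zr^{\sigma_i}$, $\sigma_i$-th coordinate equal to $1$, and $\sigma_\ell$-th coordinate equal to $0$ for $\ell\neq i$.

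Next I would prove this normalized basis is unique. If $x_i$ and $x_i'$ both satisfy the conditions, then $v=x_i-x_i'\in X\cap\zr^{\sigma_i}$ has vanishing $\sigma_\ell$-th coordinate for every $\ell$. Since $x_1,\dots,x_i$ are independent and lie in the $i$-dimensional space $X\cap\zr^{\sigma_i}$, they span it, so $v=\sum_{\ell\le i}c_\ell x_\ell$; comparing $\sigma_m$-th coordinates for $m\le i$ forces every $c_m=0$, hence $v=0$. Thus each $X\in e(\sigma)$ determines, and is determined by, a unique matrix $M(X)$ in this echelon form, whose only unconstrained entries are those in row $i$ and column $c$ with $c<\sigma_i$ and $c\notin\{\sigma_1,\dots,\sigma_{i-1}\}$; there are $(\sigma_i-1)-(i-1)=\sigma_i-i$ of these in row $i$, and $\sum_{i=1}^k(\sigma_i-i)=d(\sigma)$ in all.

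Finally, I would upgrade the resulting bijection $e(\sigma)\leftrightarrow\zr^{d(\sigma)}$ to a homeomorphism using the standard atlas of the Grassmannian. Let $U_\sigma\subset\grkn$ be the chart of all $k$-planes projecting isomorphically onto $\mathrm{span}(e_{\sigma_1},\dots,e_{\sigma_k})$; it is homeomorphic to $\zr^{k(n-k)}$ via the unique matrix representative whose submatrix on columns $\sigma_1,\dots,\sigma_k$ is the identity, the remaining $k(n-k)$ entries serving as coordinates. Since $M(X)$ has precisely this form, $e(\sigma)\subset U_\sigma$, and under the chart $e(\sigma)$ becomes the linear coordinate subspace cut out by setting the entry in row $i$, column $c$ to zero whenever $c>\sigma_i$ and $c\notin\{\sigma_1,\dots,\sigma_k\}$. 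One must verify both inclusions here, in particular that \emph{any} plane in $U_\sigma$ whose matrix satisfies these vanishing conditions actually lies in $e(\sigma)$; this follows by recomputing $\dim(X\cap\zr^{\sigma_i})$ and $\dim(X\cap\zr^{\sigma_i-1})$ directly from the matrix. Hence $e(\sigma)$ is homeomorphic to a linear subspace of $\zr^{k(n-k)}$ of dimension $d(\sigma)$, i.e. an open cell of dimension $d(\sigma)$.

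Nothing here is deep, but the subtle step is the last one: passing from a set-theoretic bijection to a homeomorphism. It is tempting to invoke a compactness argument, but $\zr^{d(\sigma)}$ is not compact, so one genuinely needs the chart description (equivalently, a direct proof that the normalized basis depends continuously on $X$), and the care required is in checking that $e(\sigma)$ is exactly — not merely contained in — the indicated coordinate subspace of $U_\sigma$.
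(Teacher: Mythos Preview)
Your argument is correct and complete: the echelon-form parametrization, the uniqueness check, and the passage through the chart $U_\sigma$ are all sound, and you are right to flag that the bijection-to-homeomorphism step needs the chart (or an equivalent continuity argument) rather than a compactness shortcut.

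As for comparison, the paper does not give its own proof of this lemma; it defers to Milnor--Stasheff \cite{milnor}. Your approach is essentially the standard one found there (Lemma~6.2), with one cosmetic difference worth noting. The matrix form displayed in the paper just after the lemma is \emph{not} the fully reduced echelon form you use: in the paper's display the entry of row $i$ in column $\sigma_j$ for $j<i$ is an arbitrary $\ast$, not zero, so that matrix representative is not unique and na\"ively counting its stars would give $\sum_i(\sigma_i-1)$ rather than $\sum_i(\sigma_i-i)$. Your extra Gauss--Jordan step clearing those pivot columns is exactly what makes the representative unique and the parameter count honest. Milnor--Stasheff's proof differs slightly in packaging (they phrase the homeomorphism via an explicit map from a product of open half-spaces and euclidean factors rather than via a coordinate chart), but the content is the same as yours.
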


In terms of matrices, $X\in e(\sigma)$ if and only if it can be described as the row space of a $k\times n$ matrix of the form $$\left[\begin{array}{ccccccccccccccc}
 \ast & \cdots & \ast & 1 & 0 & \cdots & 0 & 0 & 0 & \cdots & 0 & 0 & 0 & \cdots & 0 \\
\ast & \cdots & \ast & \ast & \ast & \cdots & \ast & 1 & 0 &\cdots & 0 & 0 & 0 &\cdots & 0 \\
\vdots & & & & & & & & & & & & & & \vdots \\
\ast &\cdots & \ast & \ast & \ast  & \cdots & \ast & \ast & \ast & \cdots & \ast & 1 & 0 & \cdots & 0
\end{array}\right]$$ where the $i$-th row has $\sigma_i$-th entry positive (say equal to 1) and all subsequent entries zero. Equivalently, we could (and do in the sequel) consider the column space of the transpose of this matrix.

\begin{theorem}The $\binom{n}{k}$ sets $e(\sigma)$ form the cells of a CW-decomposition of $G_k(\zr^n)$.
\end{theorem}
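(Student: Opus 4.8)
The plan is to follow the classical argument via the Stiefel manifold, as in \cite{milnor}. First, the number of Schubert symbols $1\le\sigma_1<\cdots<\sigma_k\le n$ is exactly the number of $k$-element subsets of $\{1,\dots,n\}$, namely $\binom{n}{k}$. By the discussion preceding the statement the sets $e(\sigma)$ are pairwise disjoint and cover $\grkn$, and by the previous lemma each is an open cell of dimension $d(\sigma)$. So what remains is to exhibit, for each $\sigma$, a characteristic map $\Phi_\sigma\colon D^{d(\sigma)}\to\grkn$ carrying the open disk homeomorphically onto $e(\sigma)$ and the boundary sphere into the union of cells of strictly smaller dimension, and then to verify the topology axiom.

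To build $\Phi_\sigma$ I would pass to the Stiefel manifold $V_k(\zr^n)$ of orthonormal $k$-frames, with projection $\pi\colon V_k(\zr^n)\to\grkn$. Given $X\in e(\sigma)$, the flag $\zr^0\subset\zr^1\subset\cdots\subset\zr^n$ picks out a canonical orthonormal basis $(x_1,\dots,x_k)$ of $X$: let $x_i$ be the unique unit vector spanning the one-dimensional orthogonal complement of $X\cap\zr^{\sigma_i-1}$ inside $X\cap\zr^{\sigma_i}$ and having positive $\sigma_i$-th coordinate. This gives a section $s_\sigma\colon e(\sigma)\to V_k(\zr^n)$; let $e'(\sigma)=s_\sigma(e(\sigma))$. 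Unwinding definitions, $e'(\sigma)$ is precisely the set of orthonormal frames $(x_1,\dots,x_k)$ with $x_i\in\zr^{\sigma_i}$ and positive $\sigma_i$-th coordinate, and $\pi$ restricts to a homeomorphism $e'(\sigma)\cong e(\sigma)$.

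The core of the proof is to show that the closure $\overline{e'(\sigma)}$ in $V_k(\zr^n)$ is homeomorphic to a closed ball $D^{d(\sigma)}$ via a homeomorphism sending the open ball onto $e'(\sigma)$. I would argue by induction on $k$. For $k=1$, $e'(\sigma_1)$ is the open hemisphere $\{x\in S^{\sigma_1-1}\subset\zr^{\sigma_1}:x_{\sigma_1}>0\}$, whose closure is a closed hemisphere, i.e.\ a disk of dimension $\sigma_1-1$. For the inductive step, forgetting the last frame vector realizes $\overline{e'(\sigma)}$ as a bundle over $\overline{e'(\sigma_1,\dots,\sigma_{k-1})}$ whose fiber over $(x_1,\dots,x_{k-1})$ is the closed hemisphere of unit vectors in $\zr^{\sigma_k}$ orthogonal to $x_1,\dots,x_{k-1}$ with nonnegative $\sigma_k$-th coordinate, a disk of dimension $\sigma_k-k$; since the base is contractible, a little work identifies the total space with a closed disk of dimension $(\sigma_k-k)+\sum_{i<k}(\sigma_i-i)=d(\sigma)$, compatibly with the open stratum $e'(\sigma)$. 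Composing with $\pi$ yields $\Phi_\sigma\colon D^{d(\sigma)}\to\grkn$. On the interior it is $\pi$ followed by the inverse of the homeomorphism above, so it maps onto $e(\sigma)$ homeomorphically; and any boundary frame has some $x_i$ with vanishing $\sigma_i$-th coordinate, hence $x_i\in\zr^{\sigma_i-1}$, so the plane it spans lies in $e(\tau)$ for a Schubert symbol $\tau\ne\sigma$ with $\tau_j\le\sigma_j$ for all $j$, whence $d(\tau)<d(\sigma)$. Thus $\Phi_\sigma$ sends $S^{d(\sigma)-1}$ into the $(d(\sigma)-1)$-skeleton.

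Finally, the CW axioms drop out: there are finitely many cells, closure-finiteness is automatic, and the weak-topology condition holds because the total map $\coprod_\sigma D^{d(\sigma)}\to\grkn$ is a continuous surjection from a compact space onto a Hausdorff space, hence a quotient map, inducing exactly the claimed cell structure. The step I expect to demand the most care is the inductive identification of $\overline{e'(\sigma)}$ with a closed ball: the ``hemisphere bundle'' is not obviously locally trivial, since the sphere on which $x_i$ varies depends on the earlier vectors and genuinely degenerates as those vectors approach the boundaries of their hemispheres, so one must check that these degenerations assemble into a manifold-with-corners whose underlying space is a ball rather than something more singular, and that the ball structure is compatible with the open Schubert stratum.
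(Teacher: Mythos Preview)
Your proposal is correct and follows exactly the classical argument from \cite{milnor} that the paper defers to; the paper itself does not supply a proof of this theorem, instead directing the reader to Milnor--Stasheff at the start of Section~\ref{schubert}. Your outline---lifting to half-Schubert cells $e'(\sigma)$ in the orthonormal Stiefel manifold, identifying $\overline{e'(\sigma)}$ with a closed disk by induction on $k$ via the hemisphere-bundle structure, and then projecting---is precisely their strategy, and you have correctly flagged the only genuinely delicate step (the ball identification as the fibers degenerate).
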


\begin{proposition}The number of $r$-cells in $G_k(\zr^n)$ is equal to the number of partitions of $r$ into at most $k$ integers each of which is $\le n-k$.
\end{proposition}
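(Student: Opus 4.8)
The plan is to count Schubert symbols directly, since by the preceding results the $r$-cells of $\grkn$ are exactly the cells $e(\sigma)$ with $d(\sigma) = r$. First I would introduce the change of variables $\lambda_i = \sigma_i - i$ for $1 \le i \le k$. The defining inequalities $1 \le \sigma_1 < \sigma_2 < \cdots < \sigma_k \le n$ become, after this substitution, $0 \le \lambda_1 \le \lambda_2 \le \cdots \le \lambda_k \le n - k$: strict increase of the $\sigma_i$ translates to weak increase of the $\lambda_i$ because consecutive $\sigma$'s differ by at least $1$, while $\sigma_1 \ge 1$ gives $\lambda_1 \ge 0$ and $\sigma_k \le n$ gives $\lambda_k \le n-k$. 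The substitution is visibly invertible via $\sigma_i = \lambda_i + i$, so it is a bijection between Schubert symbols and weakly increasing $k$-tuples drawn from $\{0, 1, \dots, n-k\}$.

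Next I would note that $d(\sigma) = \sum_{i=1}^k (\sigma_i - i) = \sum_{i=1}^k \lambda_i$, so under this bijection the cells of dimension $r$ correspond precisely to the tuples $(\lambda_1, \dots, \lambda_k)$ with $\lambda_1 + \cdots + \lambda_k = r$. Finally, reading such a tuple in decreasing order and discarding its zero entries identifies weakly increasing $k$-tuples from $\{0,\dots,n-k\}$ summing to $r$ with partitions of $r$ into at most $k$ parts, each part $\le n-k$; this is a bijection because padding a partition with zeros out to length exactly $k$ is the inverse operation. Composing the two bijections gives the stated count.

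There is no serious obstacle here; the only points needing a little care are checking that the translation of inequalities in the first step is an equivalence in both directions (in particular that nothing is lost in the boundary cases $\sigma_1 = 1$ and $\sigma_k = n$), and keeping straight the convention of ``at most $k$ parts'' for partitions versus tuples of length exactly $k$ with zeros permitted. As a sanity check I would also verify that summing over $r$ recovers $\binom{n}{k}$, consistent with the theorem that the cells $e(\sigma)$ are indexed by the $k$-element subsets of $\{1,\dots,n\}$, and test the formula on a small case such as $\gr$.
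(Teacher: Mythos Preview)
Your proof is correct. The paper itself does not supply a proof of this proposition, instead referring the reader to Milnor--Stasheff \cite{milnor} for details; your argument via the substitution $\lambda_i=\sigma_i-i$ is precisely the standard one found there.
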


For example, the possible Schubert symbols and cells for $\gr$ are as follows. Such a symbol has the form $\sigma = (\sigma_1, \sigma_2)$ where $1\le \sigma_1<\sigma_2\le 4$.

\medskip

\begin{tabular}{c|c}
$\sigma$ & $d(\sigma)$ \\ \hline
$(1,2)$ & 0 \\
$(1,3)$ & 1 \\
$(1,4)$ & 2 \\
$(2,3)$ & 2 \\
$(2,4)$ & 3 \\
$(3,4)$ & 4
\end{tabular}

\medskip

The mod 2 homology of $G_k(\zr^n)$ is easily computed from the Schubert cell decomposition: since the induced boundary maps are all either 0 or multiplication by 2, the mod 2 homology has basis corresponding to the cells.

\medskip

Continuing the example of $\gr$, we have $$H_i(G_2(\zr^4),\zz/2) = \begin{cases}
								\zz/2 & i = 0 \\
								\zz/2 & i=1 \\
								\zz/2 \oplus \zz/2 & i=2 \\
								\zz/2 & i=3 \\
								\zz/2 & i=4
								\end{cases}$$
								
\subsection{Persistent Homology}\label{persistence} Suppose we are given a finite nested sequence of finite simplicial complexes
$$K_{R_1}\subset K_{R_2}\subset \cdots \subset K_{R_p},$$ where the $R_i$ are real numbers $R_1<R_2<\cdots <R_p$.  For each homological degree $\ell\ge 0$, we then obtain a sequence of homology groups and induced linear transformations (homology with $\zz/2$-coefficients for simplicity)
$$H_\ell(K_{R_1})\to H_\ell(K_{R_2})\to\cdots\to H_\ell(K_{R_p}).$$  Since the complexes are finite, each $H_\ell(K_{R_i})$ is a finite-dimensional vector space.  Thus, there are only finitely many distinct homology classes.  A particular class $z$ may come into existence in $H_\ell(K_{R_s})$, and then one of two things happens.  Either $z$ maps to $0$ (i.e., the cycle representing $z$ gets filled in) in some $H_\ell(K_{R_t})$, $R_s<R_t$, or $z$ maps to a nontrivial element in $H_\ell(K_{R_p})$.  This yields a {\em barcode}, a collection of interval graphs lying above an axis parametrized by $R$.  An interval of the form $[R_s,R_t]$ corresponds to a class that appears at $R_s$ and dies at $R_t$.  Classes that live to $K_{R_p}$ are usually represented by the infinite interval $[R_s,\infty)$ to indicate that such classes are real features of the full complex $K_{R_p}$.

As an example, consider the tetrahedron $T$ with filtration $$T_0\subset T_1\subset T_2\subset T_3\subset T_4\subset T_5=T$$ defined by $T_0=\{v_0,v_1,v_2,v_3\}$, $T_1=T_0 \cup \{\textrm{all edges}\}$, $T_2=T_1\cup [v_0 v_1 v_2]$, $T_3=T_2\cup [v_0 v_1 v_3]$, $T_4=T_3\cup [v_0 v_2 v_3]$, and $T_5=T$.  The barcodes for this filtration are shown in Figure \ref{tetrafilt}.  Note that initially, there are $4$ components ($\beta_0=4$), which get connected in $T_1$, when $3$ independent $1$-cycles are born ($\beta_1=3$).  These three $1$-cycles die successively as triangles get added in $T_2$, $T_3$, and $T_4$.  The addition of the final triangle in $T_5$ creates a $2$-cycle ($\beta_2=1$).  

\begin{figure}
\centerline{\includegraphics[width=3in]{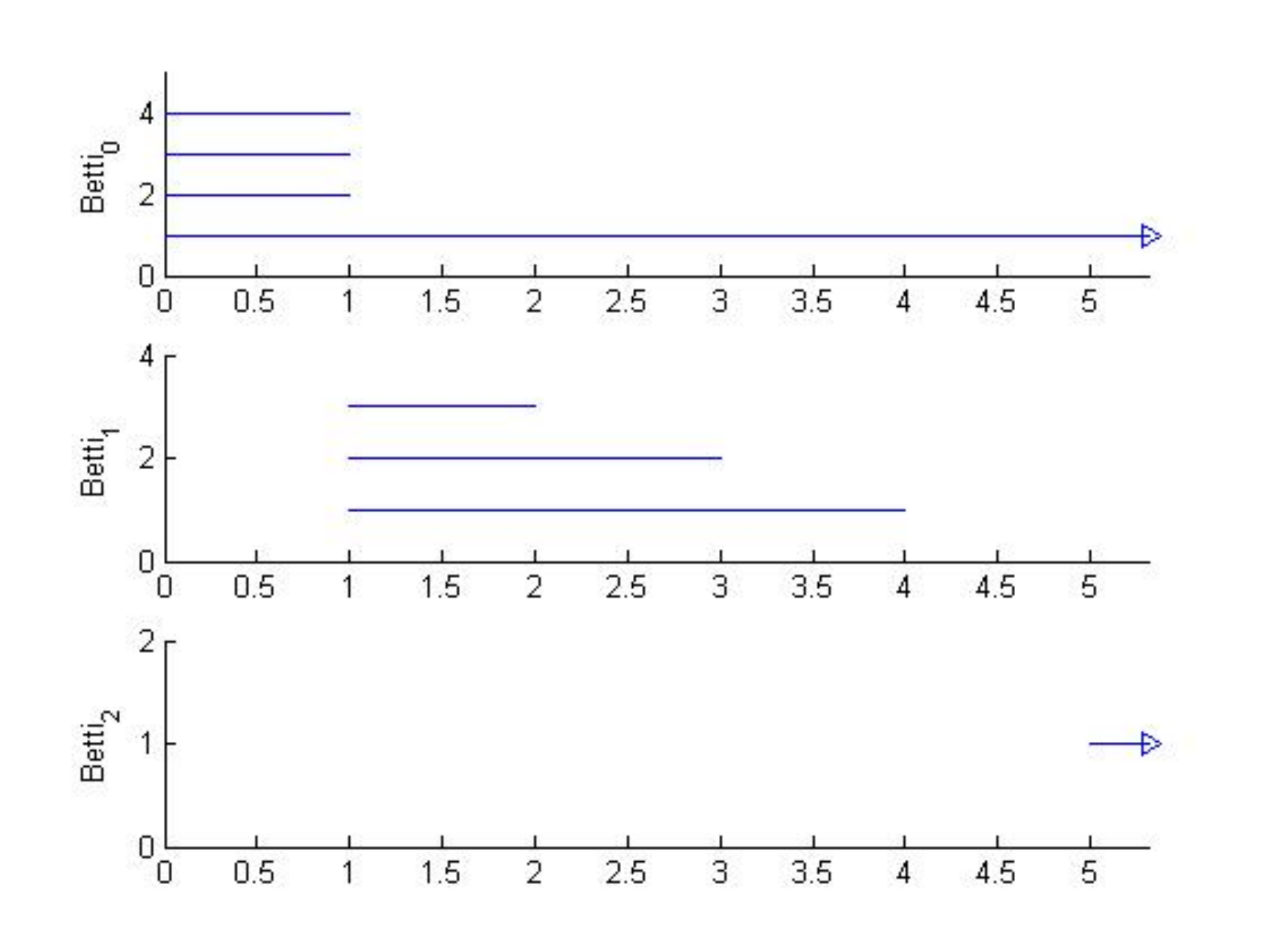}}
\caption{\label{tetrafilt} The barcodes for a filtration of the tetrahedron}
\end{figure}

For analyzing point cloud data, one needs a simplicial complex modeling the underlying space.  Since it is impossible to know {\em a priori} if a complex is ``correct", one builds a nested family of complexes approximating the data cloud, computes the persistent homology of the resulting filtration, and looks for homology classes that exist in long sections of the filtration. We discuss two popular methods for doing this in the next subsection.


\subsection{Vietoris-Rips and witness complexes}\label{complexes}

Now suppose we are given a discrete set $X$ of points in some metric space (typically a Euclidean space $\zr^m$).  The standard example of such an object is a sample of points from some geometric object $M$.  We would like to recover information about $M$ from the sample $X$, and the first step is to obtain an approximation of $M$ using only the point cloud $X$.  There are many such techniques; perhaps the most classical is the {\em Delaunay triangulation} of $X$.  This is defined as follows.  Say $X=\{x_1,x_2,\dots ,x_r\}\subset\zr^m$.  The {\em Voronoi decomposition} of $\zr^m$ relative to $X$ is the partition of $\zr^m$ into cells $V(x_i)$, $i=1,\dots ,r$, defined by
$$V(x_i)=\{x\in\zr^m : ||x-x_i|| \le ||x-x_j||, j\ne i\}.$$  The corresponding Delaunay triangulation, $\textrm{Del}(X)$, is the nerve of the Voronoi decomposition; that is, a collection $V(x_{i_0}), \dots ,V(x_{i_\ell})$ forms an $\ell$-simplex in $\textrm{Del}(X)$ if $\cap_{j=0}^\ell V(x_{i_j}) \ne \emptyset$.  One obtains a geometric realization of $\textrm{Del}(X)$ via the map $V(x_i)\mapsto x_i$.  See Figure \ref{delaunay} for an example.

\begin{figure}
\includegraphics[width=2.5in]{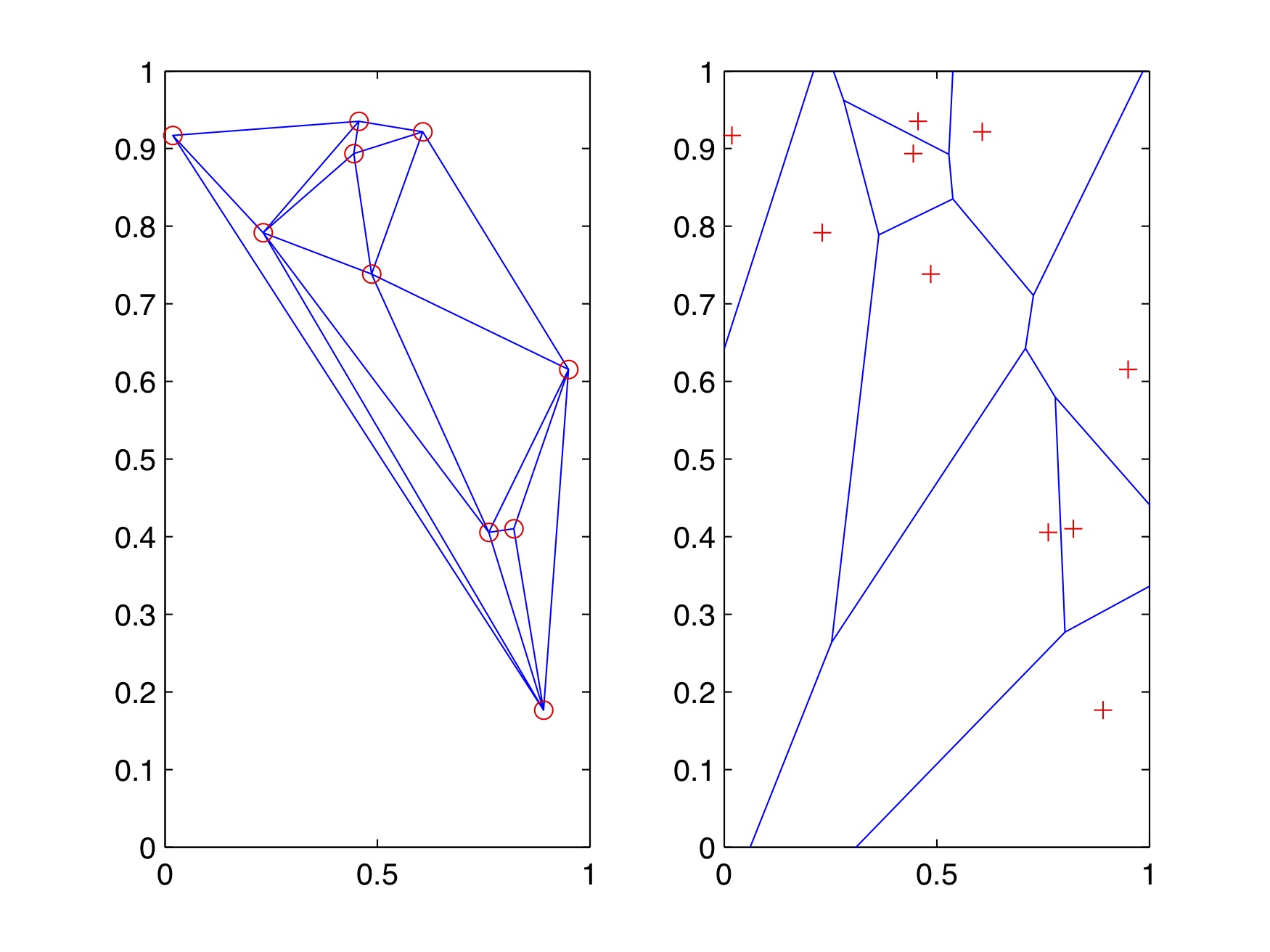}\qquad \includegraphics[width=2.5in]{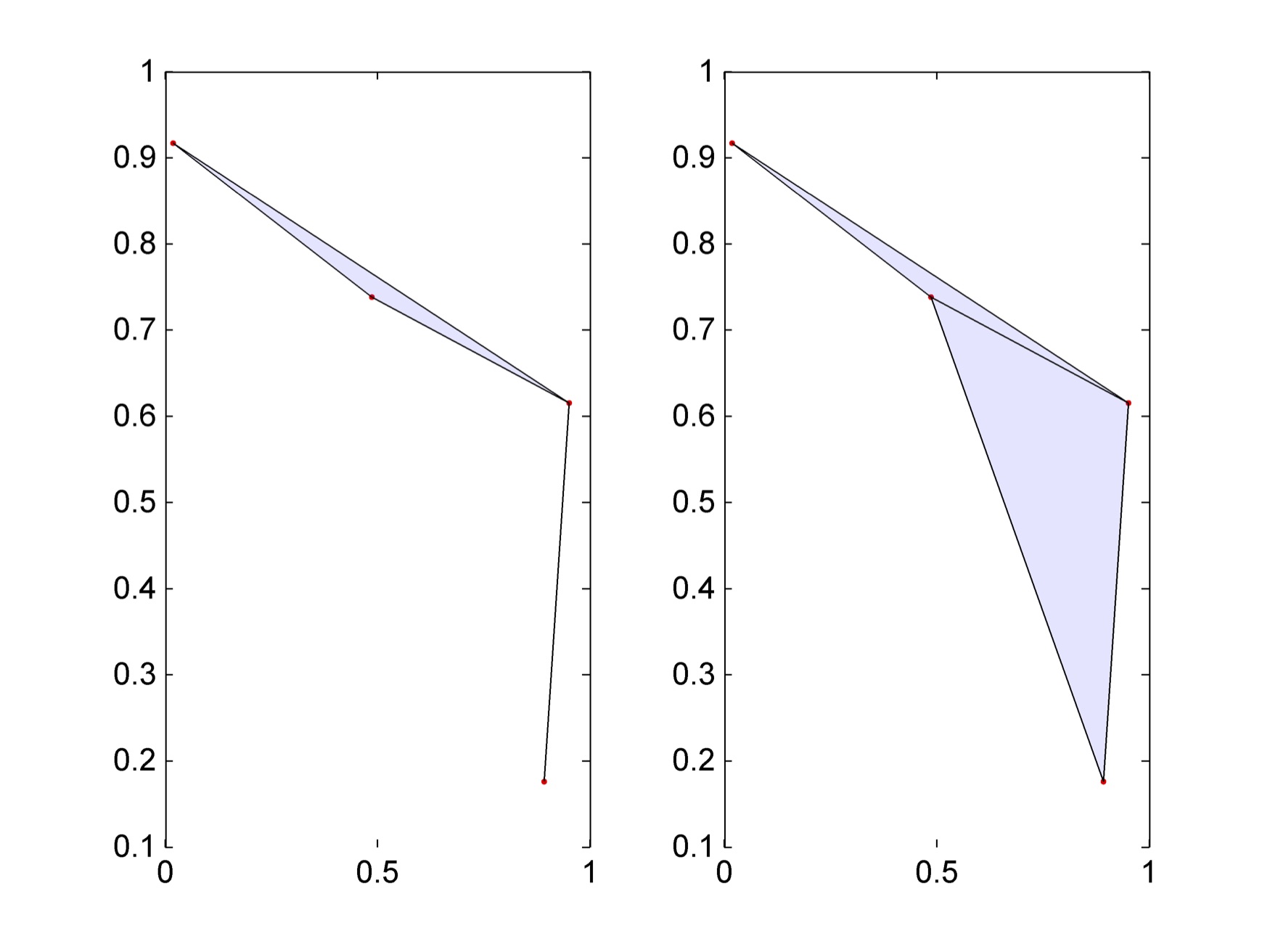}
\caption{\label{delaunay} ({\bf a}) A Delaunay triangulation of a collection of points in the plane with the corresponding Voronoi diagram, and ({\bf b)} two associated witness complexes}
\end{figure}

While the Delaunay triangulation provides a good approximation to the underlying space $M$, it has several disadvantages.  If the point cloud $X$ is large, there will be a very large number of simplices in $\textrm{Del}(X)$.  Also, $\textrm{Del}(X)$ suffers from the ``curse of dimensionality;" that is, if the ambient dimension ($m$) is large, calculating the Voronoi decomposition is computationally expensive.

There are many popular alternatives to the Delaunay triangulation. The one used most often is the {\em Vietoris-Rips complex}, which is built as follows. Consider the point cloud $X$ and let $r>0$. The Vietoris-Rips complex with parameter $r$ is the simplicial complex $VR(X,r)$ whose $k$-simplices are $$\{(x_0,\dots ,x_k): d(x_i,x_j)< r, i\ne j\}.$$ That is, if one imagines a ball of radius $r/2$ around each point $x\in X$, then we join the points $x_i$ and $x_j$ with an edge if the balls intersect. Observe that if $r<r'$ then there is an inclusion of complexes $VR(X,r)\subset VR(X,r')$. We therefore have a nested sequence of complexes $\{VR(X,r)\}_{r\ge 0}$ and we may study the persistent homology of this filtration. The corresponding barcodes yield information about the topology of the underlying space $M$. 

Many software packages support the calculation of Vietoris-Rips persistence on point clouds. In this paper, we use the Eirene package developed by Gregory Henselman \cite{henselman}. Other popular programs include Ulrich Bauer's Ripser \cite{bauer} and Vidit Nanda's Perseus \cite{nanda}. 

In Section \ref{g242}, we shall use the {\em witness complexes} of de Silva and Carlsson \cite{cds}.  The idea is to model the Delaunay triangulation on a smaller set of points $L\subset X$, called {\em landmarks}, in such a way that the topology of the underlying object is well-approximated.  Moreover, the definition makes sense in any metric space, so assume that $X$ is a metric space with distance function $d$ (e.g., $X$ could be a finite point cloud in $\zr^m$ with the usual Euclidean distance).  Choose a subset $L=\{\ell_1,\ell_2,\dots ,\ell_n\}$ of $X=\{x_1,x_2,\dots ,x_N\}$ and let $R\ge 0$ be a real number.

The {\em witness complex} $W(X,L,R)$ is defined as follows:
\begin{itemize}
\item The vertex set of $W(X,L,R)$ is $L$;

\item $\ell,\ell'\in L$ span an edge if there exists an $x\in X$, called a {\em witness}, such that $$d(x,\ell),d(x,\ell')\le R+\min\{d(x,\ell''):\ell''\in L-\{\ell,\ell'\}\};$$

\item A collection $\ell_0,\dots ,\ell_p\in L$ spans a $p$-simplex if $\{\ell_i,\ell_j\}$ span an edge for all $i\ne j$.
\end{itemize}

Examples of witness complexes are shown in Figure \ref{delaunay}({\bf b}) alongside the associated Delaunay triangulation.    Four landmark points were chosen using the maxmin procedure described below.  The complex on the left has $R=.0329$, and the complex on the right has $R=.1317$.  Note that the larger value of $R$ yields a complex with more simplices.  Also, note that the witness complex is a coarse approximation of the Delaunay triangulation.

We make some observations about this definition.  Let $D$ be the $n\times N$ matrix of distances from points in $L$ to points in $X$.
\begin{itemize}
\item If $R=0$, then $\ell,\ell'\in L$ form an edge if there is an $x_i\in X$ such that $d(x_i,\ell)$ and $d(x_i,\ell')$ are the two smallest entries in the $i$-th column of $D$.  This is analogous to the existence of an edge in the Delaunay triangulation $\textrm{Del}(L)$.

\item For $R>0$, one may think of relaxing the boundaries of the Voronoi diagram of $L$ and taking the nerve of the resulting covering of $X$.

\item If $0\le R<R'$, then there is an inclusion of simplicial complexes $W(X,L,R)\subseteq W(X,L,R')$.
\end{itemize}

By a theorem of de Silva and Carlsson \cite{cds}, this complex is a natural analogue of the Delaunay triangulation for a space represented by point cloud data.

Suppose that $X$ is a sample of points from some object $M\subset \zr^m$.  There is no guarantee that $W(X,L,R)$ recovers the topology of $M$, but experiments on familiar geometric objects \cite{cds} (spheres, for example) suggest that for a suitable range of values of $R$ and good choices of landmarks $L$, the topology of $W(X,L,R)$ is the same as that of $M$.  This begs the questions:

\begin{enumerate}
\item How should the landmark set $L$ be chosen?

\item What is the correct value of $R$?
\end{enumerate}
The second question is best handled via the use of persistent homology, which we discussed in Section \ref{persistence} above.  As for the choice of landmarks, there are three standard options:
\begin{enumerate}
\item Select landmarks at random.

\item Use the {\em maxmin} procedure:  Choose a seed $\ell_1$ at random.  Then if $\ell_1,\dots ,\ell_n$ have been chosen, let $\ell_{n+1}\in X-\{\ell_1,\dots ,\ell_n\}$ be the point which maximizes the function $$z\mapsto\min\{d(z,\ell_1),d(z,\ell_2),\dots ,d(z,\ell_n)\}.$$

\item Use a density-based strategy.
\end{enumerate}

The maxmin procedure yields more evenly-spaced landmarks, but tends to emphasize extremal points.  It is generally more reliable than a random selection \cite{cds}.  Another useful resource is \cite{oudot}. In our experiments in Section \ref{g242} below we use the maxmin process to generate landmarks.

\subsection{Sampling procedures}\label{sampling} 
To build a Vietoris-Rips or witness complex on points in $\grkn$, we need to develop a sampling procedure. The first question to be asked is in which euclidean space do we embed $\grkn$? This is highly nontrivial. Even in the case of projective spaces ($k=1$) it is not so obvious how to proceed. A whole industry has been devoted to the question of the minimal embedding dimension of $\rp^n$ \cite{davis}, but the proof of the minimality of any particular embedding rarely comes with an explicit {\em formula} for the map. An exception is if one insists on an {\em isometric} embedding \cite{zhang}, but the minimal dimension of such an embedding for $\rp^n$ is $n(n+3)/2$, which grows rather quickly.

For arbitrary Grassmannians, one could try to use the Pl\"ucker embedding $\grkn\to P(\bigwedge^k(\zr^n)) = \rp^{\binom{n}{k}-1}$ defined by $$(x_1,\dots ,x_k) \mapsto [x_1\wedge \cdots \wedge x_k]$$ (where $[v]$ denotes the line spanned by the vector $v$) and then embed the target projective space into euclidean space. Of course this explodes the dimension further, making this an impractical solution. Aside from some low dimensional projective spaces, we will instead approach this problem via the following result.

\begin{proposition}\label{orthogidem}
The manifold $\grkn$ is diffeomorphic to the smooth manifold consisting of all $n\times n$ symmetric, idempotent matrices of trace $k$. The map $\varphi$ realizing this takes a $k$-plane $X$ to the operator defined by orthogonal projection onto $X$.
\end{proposition}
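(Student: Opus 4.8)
The plan is to exhibit the map $\varphi$ explicitly and its inverse, then check both are smooth. First I would define $\varphi\colon \grkn \to M_n(\zr)$ by sending a $k$-plane $X$ to the matrix $P_X$ of orthogonal projection onto $X$. Concretely, if $X$ is the column space of an $n\times k$ matrix $A$ whose columns form an orthonormal basis of $X$, then $P_X = AA^{T}$; one checks this is independent of the chosen orthonormal basis (replacing $A$ by $AQ$ with $Q\in O(k)$ leaves $AA^{T}$ unchanged), so $\varphi$ is well defined. It is then immediate that $P_X$ is symmetric, that $P_X^2 = AA^{T}AA^{T} = A(A^{T}A)A^{T} = AA^{T} = P_X$ since $A^{T}A = I_k$, and that $\operatorname{tr}(P_X) = \operatorname{tr}(AA^{T}) = \operatorname{tr}(A^{T}A) = \operatorname{tr}(I_k) = k$. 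So $\varphi$ lands in the set $\mathcal{P}_k$ of symmetric idempotent $n\times n$ matrices of trace $k$.

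Next I would construct the inverse. Given $P\in\mathcal{P}_k$, symmetry forces $P$ to be orthogonally diagonalizable over $\zr$, idempotence forces all eigenvalues to lie in $\{0,1\}$, and the trace condition forces the eigenvalue $1$ to occur with multiplicity exactly $k$. Hence the image (equivalently the $1$-eigenspace) of $P$ is a $k$-dimensional subspace $X_P$, and one sets $\psi(P) = X_P$. A direct computation shows $\psi\circ\varphi = \mathrm{id}$ (the image of $AA^{T}$ is the column span of $A$, which is $X$) and $\varphi\circ\psi = \mathrm{id}$ (the orthogonal projection onto the $1$-eigenspace of $P$ is $P$ itself, since $P$ acts as the identity on that eigenspace and as zero on its orthogonal complement, the $0$-eigenspace). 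So $\varphi$ is a bijection onto $\mathcal{P}_k$.

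For the manifold structure, I would first argue that $\mathcal{P}_k$ is a smooth submanifold of the space of symmetric matrices: it is a union of connected components (or a single orbit) of the fixed-point-free sort of situation, but cleanest is to observe that $O(n)$ acts transitively on $\mathcal{P}_k$ by conjugation $Q\cdot P = QPQ^{T}$ (diagonalize any two elements and conjugate between them), with the stabilizer of the standard projection $\operatorname{diag}(I_k,0)$ equal to $O(k)\times O(n-k)$; thus $\mathcal{P}_k \cong O(n)/(O(k)\times O(n-k))$, which is the classical presentation of $\grkn$ as a homogeneous space. Smoothness of $\varphi$ can then be checked by lifting through the submersion $V_k(\zr^n)\to \grkn$: the composite $V_k(\zr^n)\to \mathcal{P}_k$ sending a $k$-frame to the projection onto its span is given by the smooth formula $A\mapsto A(A^{T}A)^{-1}A^{T}$ (valid for any, not necessarily orthonormal, frame $A$, since $A^{T}A$ is invertible precisely when $A$ has rank $k$), and this descends to $\varphi$; smoothness of $\psi$ follows since the homogeneous-space identifications are compatible.

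I expect the main obstacle to be the bookkeeping around manifold structures rather than any single hard idea: one must be careful to say precisely what smooth structure $\mathcal{P}_k$ carries and to verify that the bijection $\varphi$ and its inverse are morphisms for that structure, which is most efficiently done by recognizing both sides as the homogeneous space $O(n)/(O(k)\times O(n-k))$ and checking that $\varphi$ intertwines the two $O(n)$-actions. The algebraic verifications (symmetry, idempotence, trace, and that $\varphi$, $\psi$ are mutually inverse) are routine linear algebra; the only mild care needed is the basis-independence of $AA^{T}$ and the spectral-theorem argument that pins down the eigenvalue multiplicities.
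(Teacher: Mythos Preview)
Your argument is correct and its core coincides with the paper's: define $\varphi(X)=AA^{T}$ for an orthonormal-basis matrix $A$, verify symmetry, idempotence, and trace, and invert by sending a projection to its image. Where you diverge is in rigor rather than strategy: the paper simply asserts smoothness because the entries of $AA^{T}$ are polynomial and handles bijectivity in one line, never addressing the smooth structure on $\mathcal{P}_k$ or smoothness of the inverse, whereas you supply both via the homogeneous-space identification $O(n)/(O(k)\times O(n-k))$ and the frame formula $A\mapsto A(A^{T}A)^{-1}A^{T}$. You also state the basis-independence correctly ($A\mapsto AQ$ with $Q\in O(k)$ leaves $AA^{T}$ fixed); the paper's phrasing of this step as ``conjugating $AA^{T}$'' is imprecise. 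In short, your proposal is the paper's proof with the gaps around smoothness filled in.
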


\begin{proof}
If $X$ is a $k$-plane with orthonormal basis $x_1,\dots ,x_k$, denote by $A$ the $n\times k$ matrix having the $x_i$ as columns. Define a map $\varphi:\grkn \to M_n(\zr)$ by $X\mapsto AA^T$. This map is clearly smooth since it consists of polynomials in the entries of the various $x_i$. Note that choosing a different basis for $X$ amounts to conjugating $AA^T$ by the corresponding change of basis matrix. The matrix $AA^T$ is symmetric: $(AA^T)^T = (A^T)^TA^T = AA^T$. It is idempotent: $(AA^T)^2 = AA^TAA^T = AI_kA^T = AA^T$ (note that $A^TA = I_k$, the $k\times k$ identity matrix, since the columns of $A$ are orthonormal). Finally, the trace of $AA^T$ is $k$ since its rank is $k$ and its only eigenvalues are $0$ and $1$. Thus the image of $\varphi$ lies in the set of symmetric, idempotent matrices of trace $k$. To see that $\varphi$ surjects onto this set, note that such a matrix $B$ is projection onto a $k$-dimensional subspace $X$ and there exists a basis $x_1,\dots ,x_k$ with $\varphi(X)=B$. Injectivity of $\varphi$ follows since the subspace determined by a projection is unique.
\end{proof}

Now, to generate a sample of points on which to build a Vietoris-Rips or witness complex, we will use the embedding $\varphi$. A crude sampling is then obtained by the following procedure.

\begin{itemize}
    \item Select $k$ random vectors in $\zr^n$.
    \item Perform the Gram-Schmidt orthogonalization algorithm to yield an orthornomal set $x_1,\dots ,x_k$. Let $A$ be the matrix with $x_i$ as columns.
    \item Compute $AA^T$.
\end{itemize}

One immediate problem with this process is that the $k$-plane it constructs lives in the top-dimensional Schubert cell with probability 1. However, since we know the space we are interested in, and we know its homology, we can bias our sample to ensure we include points from each Schubert cell. The following procedure implements this idea.

\begin{itemize}
    \item Determine the percentage of sample points desired from each Schubert cell. For example, one might choose 5\% from a $1$-cell, 10\% from a $2$-cell, and so on. 
    \item Elements of a given Schubert cell correspond to the column space of a particular matrix form. Generate such a matrix $B$ using random vectors of the required form.
    \item Generate a random $n\times n$ orthogonal matrix $X$.
    \item Add the matrix $A = X(BB^T)X^T$ to the point cloud.
\end{itemize}

Note the final step above. If we merely took the matrix $B$, we would not end up with a well-distributed sample. For example, in the case of $\gr$, such a matrix lying in the $1$-cell of the Schubert decomposition has the following form
$$B = \left[\begin{array}{cc}
1 & \ast  \\
0 & 1  \\
0 & 0 \\
0 & 0
\end{array}\right]$$ The corresponding point in $\zr^{16}$ would have most coordinates equal to $0$, which is clearly not what we want. Conjugating the various $BB^T$ by a random orthogonal matrix $X$ (a different $X$ for each $B$) yields a wider distribution of points in $\grkn$. 

The MATLAB files we used to generate samples in various projective spaces and Grassmannians are available at \url{https://github.com/niveknosdunk/grassmann}. 

\subsection{Approximate triangulations}\label{approx}  We are now ready to search for simplicial complexes modeling the spaces $\grkn$. The procedure we employ is as follows.

\begin{itemize}
    \item Construct a sample of points on $\grkn$.
    \item Construct a collection of Vietoris-Rips or witness complexes on the point cloud.
    \item Compute the persistent homology of this filtration.
    \item Determine a range of parameters where the homology of the complexes agrees with that of $\grkn$.
\end{itemize}

\begin{definition}\label{approxtri}
Let $K_r$ denote either $VR(X,r)$ or $W(X,L,r)$. If there exists a parameter $r>0$ for which the homology of $K_r$ agrees with that of $\grkn$, then we call $K_r$ an {\em approximate triangulation} of $\grkn$. 
\end{definition}

Note that $K_r$ is a subcomplex of the euclidean space in which we have embedded $\grkn$. However, it does {\em not} necessarily lie inside the embedded $\grkn$. Still, its vertices do lie on $\grkn$ and so we can think of this as being close to a triangulation of this manifold.

\section{Results}\label{examples}

\subsection{$\zr P^2$, Part I}\label{rp21}
Let us begin by embedding $\zr P^2$ into $\zr^4$ using the map $\psi:S^2\to\zr^4$ defined by $$\psi:(x,y,z) \mapsto (xy, xz, y^2-z^2, 2yz).$$ Note that $\psi(-x,-y,-z)=\psi(x,y,z)$ and so it descends to a map $\rp^2\to\zr^4$. Generate a sample of 100 points on $S^2$ and then use this map to get the points in $\zr^4$. The persistence diagrams are shown in Figure \ref{rp2100}. There is a tiny window, around $r=0.87$ where we get the correct homology.

\begin{figure}
\begin{center}
\includegraphics[width=0.45\textwidth]{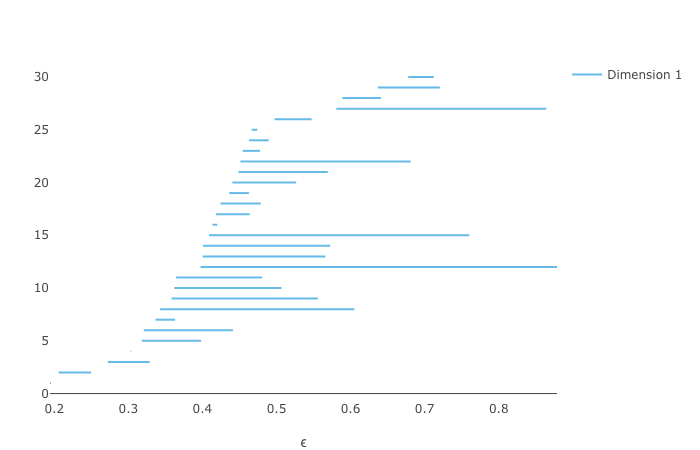} \includegraphics[width=0.45\textwidth]{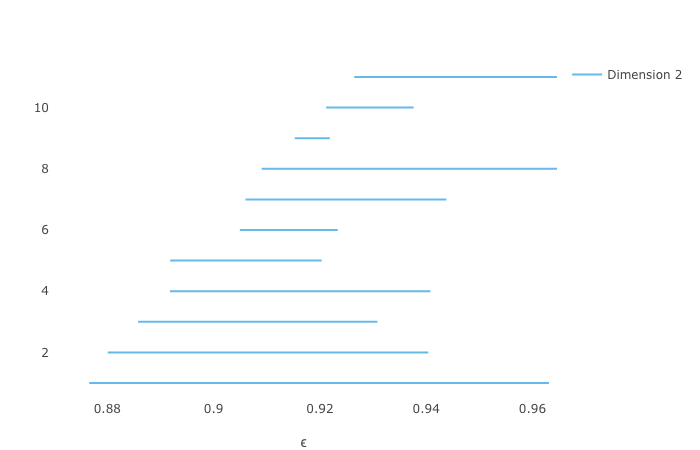}
\end{center}
\caption{\label{rp2100} Vietoris-Rips persistence diagrams for 100 points on $\rp^2$ ({\bf a}) $H_1$ persistence and ({\bf b}) $H_2$ persistence}
\end{figure}

Now generate a sample of 200 points. As expected the Vietoris-Rips complex has the correct homology for a longer range of parameters, as indicated in Figure \ref{rp2200}. Here we see a long interval $0.69 < r < 0.87$ where we get the correct homology. So the Vietoris-Rips complex built on these 200 points in $\zr^4$ is a good approximation to $\zr P^2$.

\begin{figure}
\begin{center}
\includegraphics[width=0.45\textwidth]{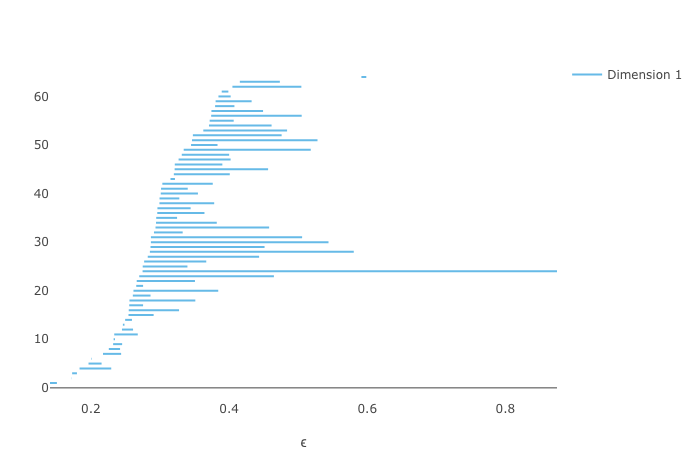} \includegraphics[width=0.45\textwidth]{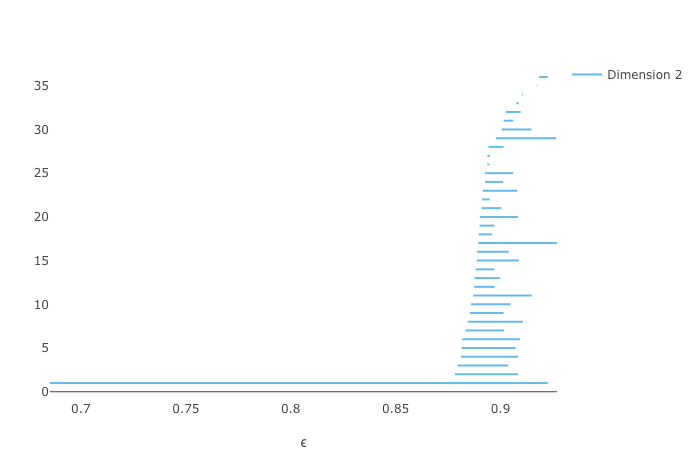}
\end{center}
\caption{\label{rp2200} Vietoris-Rips persistence diagrams for 200 points on $\rp^2$ ({\bf a}) $H_1$ persistence and ({\bf b}) $H_2$ persistence}
\end{figure}

\subsection{$\zr P^2$, Part II}\label{rp22}
The embedding of $\zr P^2$ into $\zr^4$ is not an isometric embedding, though. For that we need $\zr^5$:
$$(x,y,z) \mapsto \biggl(yz, xz, xy, \frac{1}{2}(x^2-y^2), \frac{1}{2\sqrt{3}}(x^2+y^2-2z^2)\biggr)$$

If we then generate 100 random points on this surface, we obtain the Vietoris-Rips barcodes in Figure \ref{rp2iso100}. This works better than the embedding into $\zr^4$; we get the correct answer for $0.625 < r < 0.871$. The result for 200 points is even better, and is shown in Figure \ref{rp2iso200}

\begin{figure}
\begin{center}
\includegraphics[width=0.45\textwidth]{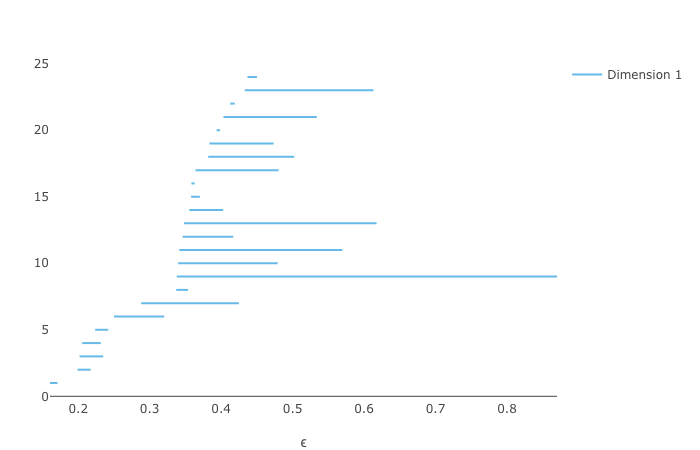} \includegraphics[width=0.45\textwidth]{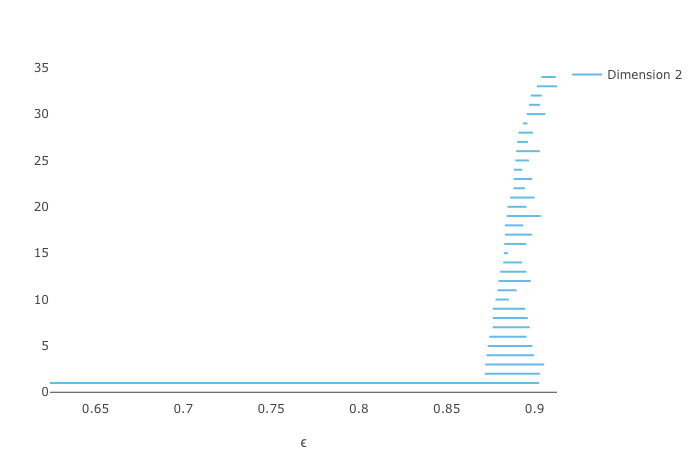}
\end{center}
\caption{\label{rp2iso100} Vietoris-Rips persistence diagrams for 100 points on $\rp^2$, using the isometric embedding into $\zr^5$ ({\bf a}) $H_1$ persistence and ({\bf b}) $H_2$ persistence}
\end{figure}

\begin{figure}
\begin{center}
\includegraphics[width=0.45\textwidth]{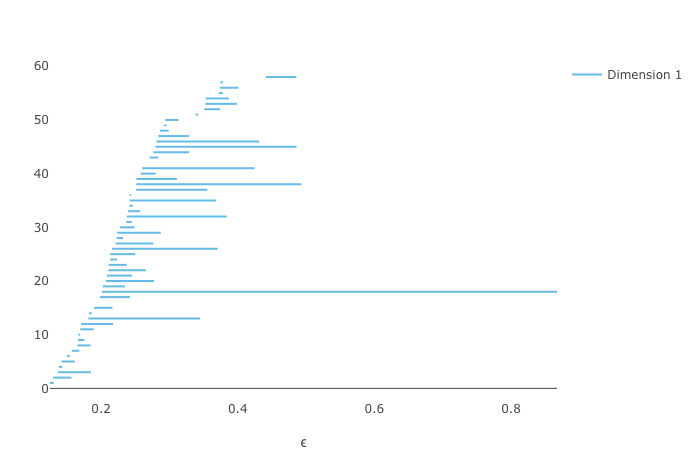} \includegraphics[width=0.45\textwidth]{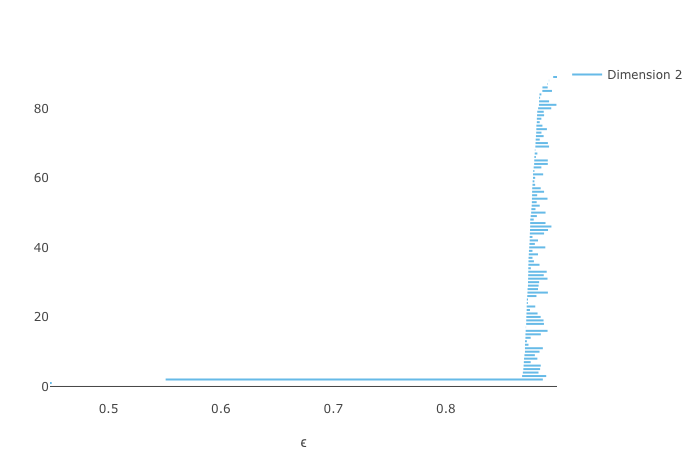}
\end{center} 
\caption{\label{rp2iso200} Vietoris-Rips persistence diagrams for 200 points on $\rp^2$, using the isometric embedding into $\zr^5$ ({\bf a}) $H_1$ persistence and ({\bf b}) $H_2$ persistence}
\end{figure}

\subsection{$\zr P^3$}\label{rp3}
We use the fact that $\zr P^3$ is diffeomorphic to $SO(3)$, the space of $3\times 3$ orthogonal matrices of determinant 1. If we select 100 random points on this space in $\zr^9$, we find that there is only a tiny window where $\beta_2=1$, so 100 points probably is not enough to yield a good approximate triangulation. The $H_3$ barcode is shown in Figure \ref{rp3100h3}.

\begin{figure}
\begin{center}
\includegraphics[width=0.45\textwidth]{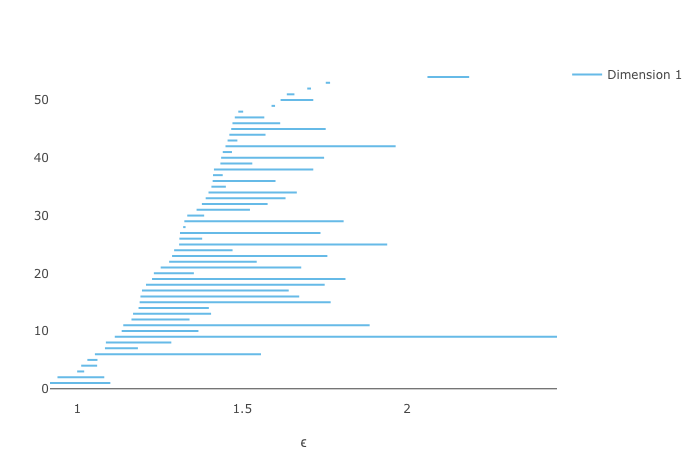} \includegraphics[width=0.45\textwidth]{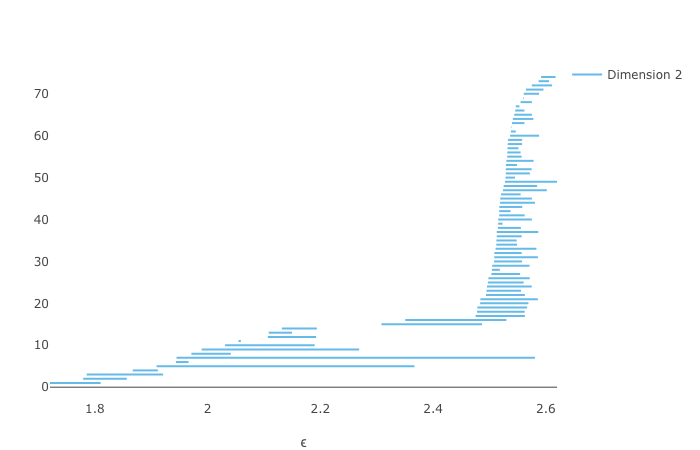}
\end{center}
\caption{\label{rp3100} Vietoris-Rips persistence diagrams for 100 points on $\rp^3$, realizing it as the Lie group $SO(3) \subset \zr^9$ ({\bf a}) $H_1$ persistence and ({\bf b}) $H_2$ persistence}
\end{figure}

\begin{figure}
\begin{center}
\includegraphics[width=0.45\textwidth]{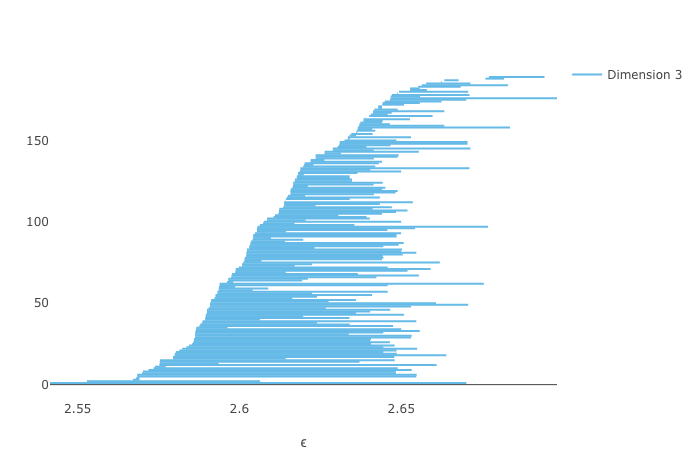}
\end{center}
\caption{\label{rp3100h3} The $H_3$ barcode for 100 points on $\rp^3$}
\end{figure}

If we now sample 200 points at random on $\rp^3$ (computation time 6:54) we obtain the barcodes in Figures \ref{rp3200} and \ref{rp3200h3}. Note that we get the correct homology for $2.1 < r < 2.4$.

\begin{figure}
\begin{center}
\includegraphics[width=0.45\textwidth]{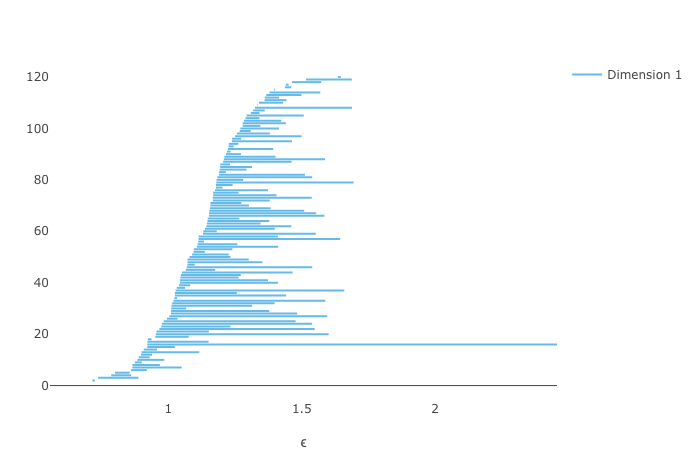} \includegraphics[width=0.45\textwidth]{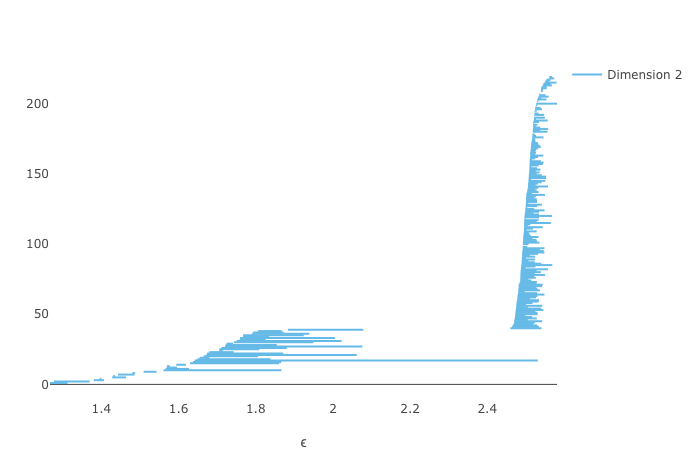}
\caption{\label{rp3200} Vietoris-Rips persistence diagrams for 200 points on $\rp^3$  ({\bf a}) $H_1$ persistence and ({\bf b}) $H_2$ persistence}
\end{center}
\end{figure}

\begin{figure}
\begin{center}
\includegraphics[width=0.45\textwidth]{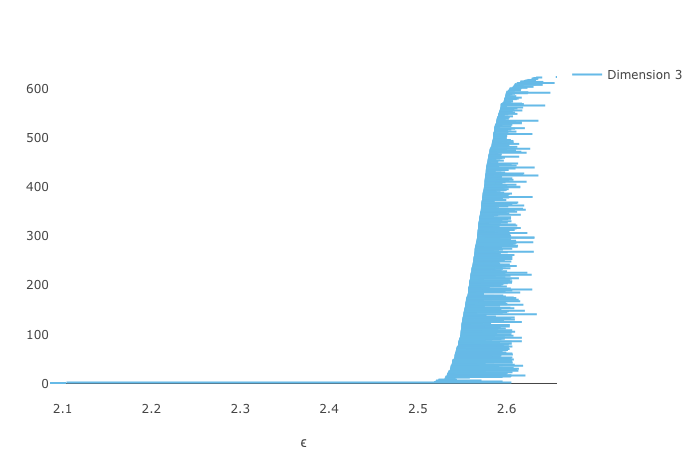} 
\end{center}
\caption{\label{rp3200h3} The $H_3$ barcode for 200 points on $\rp^3$}
\end{figure}

\subsection{$\gr$, Part I}\label{g241}
We now consider the first Grassmannian that is not a projective space. Embed the $4$-manifold $G_2(\zr^4)$ as the space of  symmetric idempotent $4\times 4$ matrices of trace 2. As a first attempt, we take the na\"ive sampling approach of generating random pairs of orthonormal vectors to build a point cloud of such matrices. However, persistence calculations now become rather cumbersome. Table \ref{comptime} shows some statistics on computation times for point clouds of various sizes on a MacBook Pro, 16GB RAM, computing homology up to dimension 4.

\begin{table}
\begin{tabular}{c|cc}
$\#$ points & Eirene & Ripser \\ \hline
100 & 1:51 & 1:15 \\
150 & 1:04:45 & X \\
200 & X & X
\end{tabular}
\caption{\label{comptime} Computation times for Vietoris-Rips persistence up to dimension 4 for $\gr$. An X indicates that the software could not complete the calculation.}
\end{table}

Eirene could compute homology for 200 points up to dimension 3 in about 3 minutes, producing a parameter value of $r=0.95$ where the homology is correct in these dimensions. It seems that $H_4$ is the sticking point. The barcodes for 150 points are shown in Figures \ref{g24150h1} and \ref{g24150h3}.  At $r=0.96$, the homology is correct up to dimension 3, but $H_4=0$ there.

\begin{figure}
\begin{center}
\includegraphics[width=0.45\textwidth]{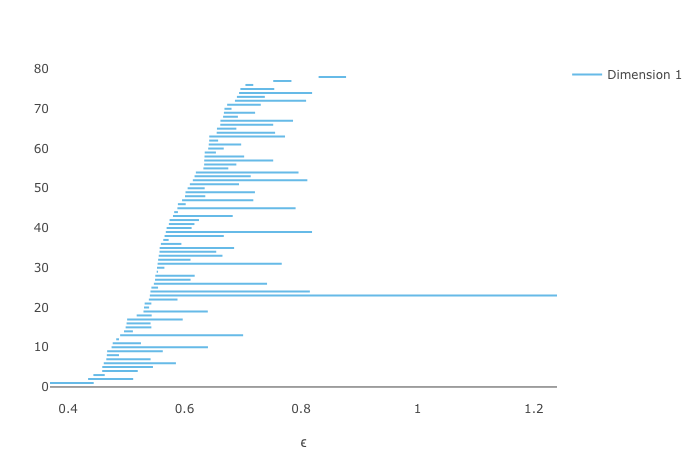} \includegraphics[width=0.45\textwidth]{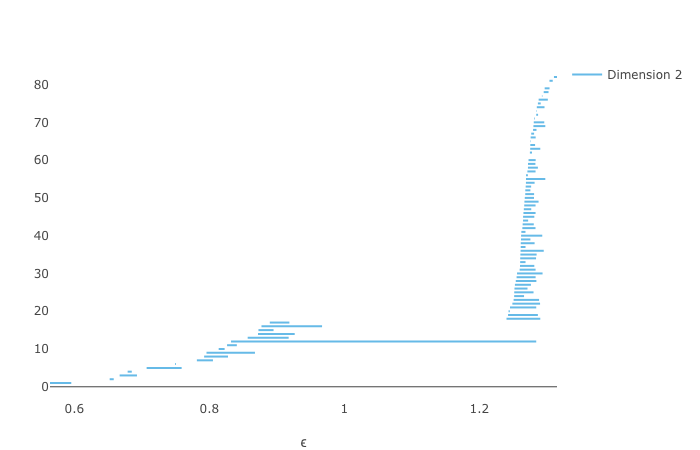}
\end{center}
\caption{\label{g24150h1} Vietoris-Rips persistence diagrams for 150 points on $\gr$ ({\bf a}) $H_1$ persistence and ({\bf b}) $H_2$ persistence}
\end{figure}

\begin{figure}
\begin{center}
\includegraphics[width=0.45\textwidth]{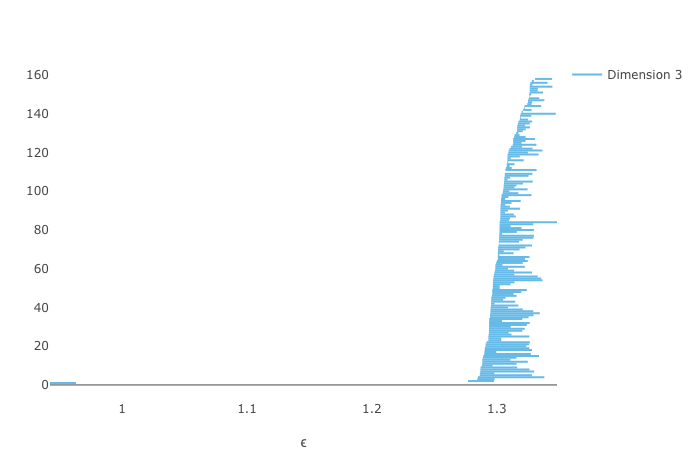} \includegraphics[width=0.45\textwidth]{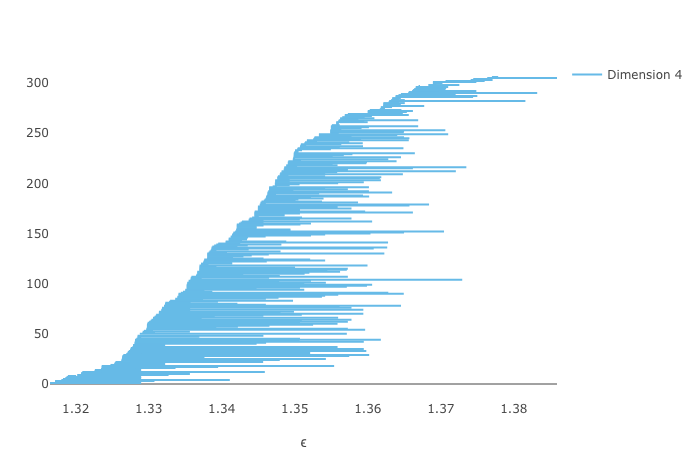}
\end{center}
\caption{\label{g24150h3} Vietoris-Rips persistence diagrams for 150 points on $\gr$ ({\bf a}) $H_3$ persistence and ({\bf b}) $H_4$ persistence}
\end{figure}

In a quest for more memory, we received an offer from Mikael Vejdemo-Johannson to use his machine. It has 256GB RAM. We began the 200 point Vietoris-Rips calculation in Eirene in the background and logged out. After 10 hours it was still processing and was using 97\% of the system memory. The next morning the process was complete; the output file (in JLD2 format) was 74 GB (!). Since Eirene uses PlotlyJS to render barcodes, they cannot be viewed remotely. Even if the file could be retrieved, it is unclear that our laptop could even open it, nor is there any guarantee that the barcodes are correct.

\subsection{$\gr$, Part II}\label{g242}
We then took a different approach. The Vietoris-Rips complex is nice because it is easy to compute, but it suffers from combinatorial explosion. We turned to witness complexes and made the associated computations using the Javaplex package \cite{javaplex} in MATLAB.

The initial attempt simply generated elements of $G_2(\zr^4)$ by taking a pair of orthonormal vectors in $\zr^4$ and using them to build a certain $4\times 4$ matrix. For this experiment, we biased the sample in the following way. For a given number $M$ of points on $\gr$, we took 5\% from the $1$-cell, 15\% from each of the $2$-cells, 25\% from the $3$-cell, and 40\% from the $4$-cell. One could choose different proportions, of course.


This worked remarkably well. We generated 5000 points on $G_2(\zr^4)$ and constructed the witness complex on 100 landmarks chosen using the max-min process. The barcodes for one such trial are shown in Figure \ref{g24correct}. Note that we get the correct homology for $r>0.125$. This witness complex, which has 145,011 simplices, is therefore a good approximate triangulation of $\gr$. The point cloud and witness points are available as text files at \url{https://github.com/niveknosdunk/grassmann}. 

\begin{figure}
\begin{center}
\includegraphics[width=0.75\textwidth]{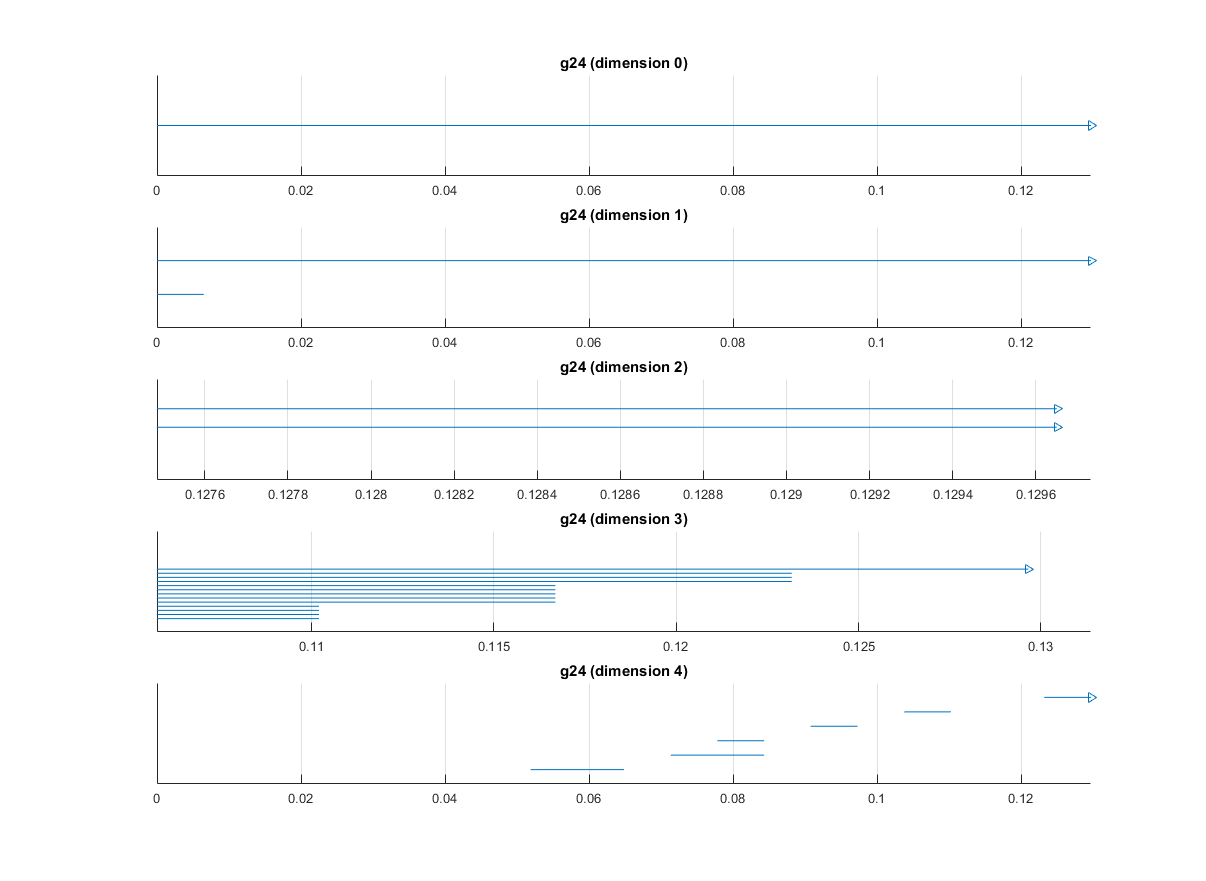}
\end{center}
\caption{\label{g24correct} Barcodes for a witness complex on 100 points in a 5000-point sample on $\gr$}
\end{figure}

\section{Conclusions}\label{conc}
In this paper we demonstrated the utility of using Vietoris-Rips and witness complexes to obtain approximate triangulations of the Grassmann manifolds $\grkn$. We were able to construct such spaces with relatively few vertices, but some questions remain for further study.

\begin{enumerate}
    \item How small of a sample can we use to generate an approximate triangulation? For example, a result in \cite{govc} asserts that any triangulation of $\gr$ must have at least 14 vertices. We built an approximate triangulation using a witness complex on 100 landmarks. Surely our algorithm will not work with only 14 points, but we plan to investigate how few we can get away with. A theorem of Niyogi-Smale-Weinberger \cite{nsw} provides lower bounds on the number of points required to compute homology correctly with high probability, but these are certainly too high and can be improved in practice.
    \item Can we push the computations further? The next Grassmannian to study is $G_2(\zr^5)$. This is a $6$-manifold, and using our procedure we would embed it in $\zr^{25}$. The machine used to compute the persistent homology of the witness complexes on $\gr$ in MATLAB ran out of memory on 100 landmarks in $G_2(\zr^5)$. We therefore need either a bigger machine running MATLAB, or software that can handle witness complexes. The GUDHI package \cite{gudhi} is one option, but we have not attempted it yet. 
    \item The author expects to gain access to a new GPU based supercomputer at his institution in the next year. This may allow for similar computations on higher-dimensional $\grkn$. 
\end{enumerate}









\end{document}